\theoremstyle{plain}%
 \newtheorem{theorem}{Theorem}
\theoremstyle{remark}
\theoremstyle{definition}
\newtheorem{definition}{Definition}
\newtheorem{example}{Example}
\begin{document}

\begin{center}
 {\large A lift of West's stack-sorting map to partition diagrams}

 \ 

 {\textsc{John M. Campbell} }

 {\footnotesize Department of Mathematics, Toronto Metropolitan University, 350 Victoria St, Toronto, ON M5B 2K3}

 {\footnotesize {\tt jmaxwellcampbell@gmail.com}}

\end{center}

 \ 

\begin{quote}
 {\small {\bf Abstract.} We introduce a lifting of West's stack-sorting map $s$ to partition diagrams, which are combinatorial objects indexing bases of 
 partition algebras. Our lifting $\mathscr{S}$ of $s$ is such that $\mathscr{S}$ behaves in the same way as $s$ when restricted to diagram basis elements 
 in the order-$n$ symmetric group algebra as a diagram subalgebra of the partition algebra $\mathscr{P}_{n}^{\xi}$. We then introduce a lifting of the 
 notion of $1$-stack-sortability, using our lifting of $s$. By direct analogy with Knuth's famous result that a permutation is $1$-stack-sortable if and only if it 
 avoids the pattern 231, we prove a related pattern-avoidance property for partition diagrams, as opposed to permutations, according to what we 
 refer to as \emph{stretch-stack-sortability}. } 
\end{quote}

 \ 

{\footnotesize Keywords: stack-sorting; partition diagram; permutation; permutation pattern; partition monoid}

{\footnotesize Mathematics Subject Classification: 05A05, 05E16}

 \ 

\section{Introduction}
 For a permutation $p$ in the symmetric group $S_{n}$, we write 
\begin{equation}\label{pLnR}
 p = L n R, 
\end{equation} 
 letting $p$ be denoted as 
 a string or tuple given by the entries of the bottom row of the two-line notation for $p$. 
 We then let \emph{West's stack-sorting map} $s$ 
 \cite{Defant2020Comb,Defant2021Discrete,Defant2021European,Defant2021Enumer,Defant2020Electron,Defant2022Theory,Defant2020Lothar,DefantKravitz2020,DefantZheng2021}
 (cf.\ \cite{West1990}) 
 be defined recursively so that 
\begin{equation}\label{sLsRn}
 s(p) = s(L) s(R) n
\end{equation}
 and so that $s$ maps permutations to permutations and sends the empty permutation to itself. Our notation and terminology concerning this mapping 
 are mainly based on references such as 
 \cite{Defant2020Comb,Defant2021Discrete,Defant2021European,Defant2021Enumer,Defant2020Electron,Defant2022Theory,Defant2020Lothar,DefantKravitz2020,DefantZheng2021}. 
 In this article, we introduce a lifting of $s$ so as to allow combinatorial objects known as \emph{partition diagrams} as input. 

 The problem of generalizing West's stack-sorting map has been considered in a number of different contexts. Notably, the stack-sorting map $s$ has been 
 generalized to Coxeter groups \cite{Defant2022Theory} and to words \cite{DefantKravitz2020}, and Cerbai, Claesson, and Ferrari generalized 
\begin{equation}\label{smap}
 s\colon S_{n} \to S_{n}
\end{equation} 
 to a function $s_{\sigma}\colon S_{n} \to S_{n}$ for a permutation pattern $\sigma$ \cite{CerbaiClaessonFerrari2020}, so that $s = s_{21}$, with a 
 related generalization of $s$ having been given by Defant and Zheng in \cite{DefantZheng2021}. Since $s$ is defined on permutations, it is natural to 
 consider generalizing this mapping using ``permutation-like'' combinatorial objects. The bases of the partition algebra $\mathscr{P}_{n}^{\xi}$ 
 generalize the bases of the order-$n$ symmetric group algebra in ways that are of interest from both a combinatorial and an algebraic perspective, and 
 this leads us to consider how partition diagrams, which index the bases of $\mathscr{P}_{n}^{\xi}$, may be used to generalize West's stack-sorting map. 
 We generalize, in this article, $s\colon S_{n} \to S_{n}$ to a mapping $\mathscr{S}\colon \mathscr{P}_{n} \to \mathscr{P}_{n}$ from the partition 
 monoid $\mathscr{P}_{n}$ to itself such that $\mathscr{S}$ restricted to permuting diagrams behaves in the same way as $s$. We then apply our lifting 
 $\mathscr{S}$ to determine an analogue of a famous result due to Knuth on $1$-stack-sortable permutations \cite[\S2.2.1]{Knuth1973}. 

 The representation theory of $\mathscr{P}_{n}^{\xi}$
 is intimately linked with that for the $n!$-dimensional symmetric group algebra. 
 Indeed, there is so much about 
 the representation theory of $\mathscr{P}_{n}^{\xi}$ and associated 
 combinatorial properties of $\mathscr{P}_{n}^{\xi}$ that are directly derived from or otherwise 
 based on the representation theory of the algebra $\text{span}(S_{n})$ and associated combinatorics 
 \cite{BenkartHalverson20171,BenkartHalverson20172,HalversonJacobson2020,HalversonRam2005,Martin1996,Martin1994,MartinWoodcock1999}. 
 The Schur--Weyl duality given by 
 how the bases of $\mathscr{P}_{n}^{\xi}$ generalize 
 the bases of $\text{span} ( S_{n})$ 
 is of importance in both combinatorial representation theory 
 and in the areas of statistical mechanics in which partition algebras had originally been defined 
 by Martin \cite{Martin2000,Martin1991,Martin1996,Martin1994} and Jones \cite{Jones1994}. 
 The foregoing considerations are representative of the extent to which partition diagrams 
 generalize permutations in a way that is of much significance in both mathematics and physics. 
 This motivates our lifting West's stack-sorting map so as to allow partition diagrams, in addition to permutations, 
 as input. 

\subsection{Preliminaries}
 To be consistent with the notation 
 in references as in 
 \cite{Defant2020Comb,Defant2021Discrete,Defant2021European,Defant2021Enumer,Defant2020Electron,Defant2022Theory,Defant2020Lothar,DefantKravitz2020,DefantZheng2021} 
 for indexing symmetric groups and maximal elements in permutations, as in \eqref{sLsRn} 
 and \eqref{smap}, 
 we let the order of a given partition algebra/monoid be denoted 
 as in the order of the symmetric group shown in \eqref{smap}. 
 In particular, following \cite{East2011}, we let the partition 
 monoid of a given order be denoted as 
 $\mathscr{P}_{n}$, and we let the corresponding partition algebra with a complex parameter $\xi$
 be denoted as $\mathscr{P}_{n}^{\xi}$. These structures are defined as follows. 

 We let $\mathscr{P}_{n}$ consist of 
 set-partitions $\mu$ of $\{ 1$, $2$, $\ldots$, $n$, $1'$, $2'$, $\ldots$, $n' \}$ that we denote with a two-line notation by analogy with 
 permutations, by aligning nodes labeled with $1$, $2$, $\ldots$, $n$ into an upper row 
 and nodes labeled with $1'$, $2'$, $\ldots$, $n'$ into a bottom row, and by forming any graph $G$ such that the 
 the set of components of $G$ equals $\mu$. 
 Graphs of this form, denoted in the manner we have specified, are referred to as \emph{partition diagrams}, 
 and two such partition diagrams are considered to be the same if the connected components are the same in both cases. 
 The components of the graph $G$ given as before are referred to as \emph{blocks}.

\begin{example}\label{illustrationblocks}
 The set-partition $\{ \{ 1, 4 \}, \{ 2, 3, 4', 5' \}, \{ 5 \}, \{ 1', 3' \}, \{ 2' \} \}$
 may be denoted as 
$$ \begin{tikzpicture}[scale = 0.5,thick, baseline={(0,-1ex/2)}] 
\tikzstyle{vertex} = [shape = circle, minimum size = 7pt, inner sep = 1pt] 
\node[vertex] (G--5) at (6.0, -1) [shape = circle, draw] {}; 
\node[vertex] (G--4) at (4.5, -1) [shape = circle, draw] {}; 
\node[vertex] (G-2) at (1.5, 1) [shape = circle, draw] {}; 
\node[vertex] (G-3) at (3.0, 1) [shape = circle, draw] {}; 
\node[vertex] (G--3) at (3.0, -1) [shape = circle, draw] {}; 
\node[vertex] (G--1) at (0.0, -1) [shape = circle, draw] {}; 
\node[vertex] (G--2) at (1.5, -1) [shape = circle, draw] {}; 
\node[vertex] (G-1) at (0.0, 1) [shape = circle, draw] {}; 
\node[vertex] (G-4) at (4.5, 1) [shape = circle, draw] {}; 
\node[vertex] (G-5) at (6.0, 1) [shape = circle, draw] {}; 
\draw[] (G-2) .. controls +(0.5, -0.5) and +(-0.5, -0.5) .. (G-3); 
\draw[] (G-3) .. controls +(1, -1) and +(-1, 1) .. (G--5); 
\draw[] (G--5) .. controls +(-0.5, 0.5) and +(0.5, 0.5) .. (G--4); 
\draw[] (G--4) .. controls +(-1, 1) and +(1, -1) .. (G-2); 
\draw[] (G--3) .. controls +(-0.6, 0.6) and +(0.6, 0.6) .. (G--1); 
\draw[] (G-1) .. controls +(0.7, -0.7) and +(-0.7, -0.7) .. (G-4); 
\end{tikzpicture}, $$
 or, equivalently, as 
$$ \begin{tikzpicture}[scale = 0.5,thick, baseline={(0,-1ex/2)}] 
\tikzstyle{vertex} = [shape = circle, minimum size = 7pt, inner sep = 1pt] 
\node[vertex] (G--5) at (6.0, -1) [shape = circle, draw] {}; 
\node[vertex] (G--4) at (4.5, -1) [shape = circle, draw] {}; 
\node[vertex] (G-2) at (1.5, 1) [shape = circle, draw] {}; 
\node[vertex] (G-3) at (3.0, 1) [shape = circle, draw] {}; 
\node[vertex] (G--3) at (3.0, -1) [shape = circle, draw] {}; 
\node[vertex] (G--1) at (0.0, -1) [shape = circle, draw] {}; 
\node[vertex] (G--2) at (1.5, -1) [shape = circle, draw] {}; 
\node[vertex] (G-1) at (0.0, 1) [shape = circle, draw] {}; 
\node[vertex] (G-4) at (4.5, 1) [shape = circle, draw] {}; 
\node[vertex] (G-5) at (6.0, 1) [shape = circle, draw] {}; 
\draw[] (G-2) .. controls +(0.5, -0.5) and +(-0.5, -0.5) .. (G-3); 
\draw[] (G-3) .. controls +(1, -1) and +(-1, 1) .. (G--5); 
\draw[] (G--4) .. controls +(-1, 1) and +(1, -1) .. (G-2); 
\draw[] (G--3) .. controls +(-0.6, 0.6) and +(0.6, 0.6) .. (G--1); 
\draw[] (G-1) .. controls +(0.7, -0.7) and +(-0.7, -0.7) .. (G-4); 
\end{tikzpicture}. $$
\end{example}

 Following \cite{HalversonRam2005}, we let the underlying field for partition algebras and symmetric group algebras be $\mathbb{C}$, as $\mathbb{C}$ 
 being algebraically closed is of direct relevance in terms of the study of the semisimple structure for partition algebras \cite{HalversonRam2005}. We 
 may let the symmetric group algebra of order $n$ be denoted by taking the linear span $\text{span}(S_{n})$, or, more explicitly, 
 $\text{span}_{\mathbb{C}} \{ \sigma : \sigma \in S_{n} \}$, of the symmetric group $S_{n}$. 
 
 For two partition diagrams $d_{1}$ and $d_{2}$, we place $d_{1}$ on top of $d_{2}$
 so that the bottom nodes of $d_{1}$ overlap with the top nodes of $d_{2}$, 
 then we remove the central row in this concatenation $d_{1} \ast d_{2}$ in such a way so as to preserve the relation
 given by topmost nodes being in the same component as bottommost nodes in $d_{1} \ast d_{2}$. 
 We then let $d_{1} \circ d_{2}$ denote the graph thus obtained from this concatenation. 
 This is the underlying multiplicative operation for partition monoids. 

\begin{example}
 Borrowing an example from \cite{East2011}, we let $d_{1}$ be as in Example \ref{illustrationblocks}, and we let $d_{2}$ be as below: 
 $$ \begin{tikzpicture}[scale = 0.5,thick, baseline={(0,-1ex/2)}] 
\tikzstyle{vertex} = [shape = circle, minimum size = 7pt, inner sep = 1pt] 
\node[vertex] (G--5) at (6.0, -1) [shape = circle, draw] {}; 
\node[vertex] (G--4) at (4.5, -1) [shape = circle, draw] {}; 
\node[vertex] (G-5) at (6.0, 1) [shape = circle, draw] {}; 
\node[vertex] (G--3) at (3.0, -1) [shape = circle, draw] {}; 
\node[vertex] (G-2) at (1.5, 1) [shape = circle, draw] {}; 
\node[vertex] (G-4) at (4.5, 1) [shape = circle, draw] {}; 
\node[vertex] (G--2) at (1.5, -1) [shape = circle, draw] {}; 
\node[vertex] (G--1) at (0.0, -1) [shape = circle, draw] {}; 
\node[vertex] (G-1) at (0.0, 1) [shape = circle, draw] {}; 
\node[vertex] (G-3) at (3.0, 1) [shape = circle, draw] {}; 
\draw[] (G-5) .. controls +(0, -1) and +(0, 1) .. (G--5); 
\draw[] (G--5) .. controls +(-0.5, 0.5) and +(0.5, 0.5) .. (G--4); 
\draw[] (G--4) .. controls +(0.75, 1) and +(-0.75, -1) .. (G-5); 
\draw[] (G-2) .. controls +(0.6, -0.6) and +(-0.6, -0.6) .. (G-4); 
\draw[] (G-4) .. controls +(-0.75, -1) and +(0.75, 1) .. (G--3); 
\draw[] (G--3) .. controls +(-0.75, 1) and +(0.75, -1) .. (G-2); 
\draw[] (G-1) .. controls +(0.6, -0.6) and +(-0.6, -0.6) .. (G-3); 
\end{tikzpicture}. $$
 We may verify that the monoid product $d_{1} \circ d_{2}$, in this case, 
 is as below \cite{East2011}: 
 $$ \begin{tikzpicture}[scale = 0.5,thick, baseline={(0,-1ex/2)}] 
\tikzstyle{vertex} = [shape = circle, minimum size = 7pt, inner sep = 1pt] 
\node[vertex] (G--5) at (6.0, -1) [shape = circle, draw] {}; 
\node[vertex] (G--4) at (4.5, -1) [shape = circle, draw] {}; 
\node[vertex] (G--3) at (3.0, -1) [shape = circle, draw] {}; 
\node[vertex] (G-2) at (1.5, 1) [shape = circle, draw] {}; 
\node[vertex] (G-3) at (3.0, 1) [shape = circle, draw] {}; 
\node[vertex] (G--2) at (1.5, -1) [shape = circle, draw] {}; 
\node[vertex] (G--1) at (0.0, -1) [shape = circle, draw] {}; 
\node[vertex] (G-1) at (0.0, 1) [shape = circle, draw] {}; 
\node[vertex] (G-4) at (4.5, 1) [shape = circle, draw] {}; 
\node[vertex] (G-5) at (6.0, 1) [shape = circle, draw] {}; 
\draw[] (G-2) .. controls +(0.5, -0.5) and +(-0.5, -0.5) .. (G-3); 
\draw[] (G-3) .. controls +(1, -1) and +(-1, 1) .. (G--5); 
\draw[] (G--5) .. controls +(-0.5, 0.5) and +(0.5, 0.5) .. (G--4); 
\draw[] (G--4) .. controls +(-0.5, 0.5) and +(0.5, 0.5) .. (G--3); 
\draw[] (G--3) .. controls +(-0.75, 1) and +(0.75, -1) .. (G-2); 
\draw[] (G-1) .. controls +(0.7, -0.7) and +(-0.7, -0.7) .. (G-4); 
\end{tikzpicture}. $$
\end{example}

 For a complex parameter $\xi$, we endow the $\mathbb{C}$-span of $\mathscr{P}_{n}$ with a multiplicative binary operation as 
 follows: $ d_1 d_2 = \xi^{\ell} d_{1} \circ d_{2}$, 
 where $\ell$ denotes the number of components contained entirely in the middle row of $d_{1} \ast d_{2}$. 
 By extending this binary operation linearly, this gives us the underlying multiplicative operation 
 for the structure known as the partition algebra, which is denoted as $\mathscr{P}_{n}^{\xi}$. 
 The set of all elements in $\mathscr{P}_{n}$, 
 as elements in $\mathscr{P}_{n}^{\xi}$, is referred to as the \emph{diagram basis} of $\mathscr{P}_{n}^{\xi}$. 

 Although the algebraic structure of $\mathscr{P}_{n}^{\xi}$ will not be used directly in this article, 
 it is useful to define $\mathscr{P}_{n}^{\xi}$ explicitly, since we are to heavily make use of an algebra homomorphism 
 defined on partition algebras and introduced in \cite{Campbell2022}, 
 and it is often convenient to use notation and terminology associated with $\mathscr{P}_{n}^{\xi}$ or its algebraic structure, 
 e.g., by referring to the diagram basis of $\mathscr{P}_{n}^{\xi}$. 

 For a partition diagram $\pi$, the \emph{propagation number} of $\pi$ refers to the number of blocks 
 in $\pi$ containing at least one vertex in the top row and at least on 
 vertex in the bottom row. 
 So, there is a clear bijection between the set of all permutations in $S_{n}$ 
 and the set of all diagrams in $\mathscr{P}_{n}$ 
 that are of propagation number $n$. 
 In our lifting West's stack-sorting map to the partition monoid, 
 to be consistent with two-line notation for permutations and with \eqref{pLnR}, 
 we let a permutation 
\begin{equation}\label{pcolon}
 p\colon \{ 1, 2, \ldots, n \} \to \{ 1, 2, \ldots, n \}
\end{equation}
 be in correspondence with the partition diagram $\pi$ given by the set-partition 
\begin{equation}\label{embedding}
 \{ \{ 1, p(1)' \}, \{ 2, p(2)' \}, \ldots, \{ n, p(n)' \} \}, 
\end{equation}
 although it is common to instead let a permutation $p$ be mapped to the partition diagram obtained by reflecting $\pi$ vertically. 

\section{A lift of West's stack-sorting map}\label{sectionlift}
 As Defant and Kravtiz explain in \cite{DefantKravitz2020}, 
 there is a matter of ambiguity in
 the problem of lifting the mapping $s$ so as to allow words allowing \emph{repeated} characters, as opposed to permutations
 written as a word in the manner indicated in \eqref{pLnR}, as the argument, for an extension of $s$. 
 We encounter similar kinds of problems in terms of the problem of lifting $s$ so as to allow partition diagrams, 
 as opposed to permutations written as permutation diagrams, as the argument, again for a lifting of $s$. 
 This is illustrated as below. 

\begin{example}\label{exampleambiguity}
 According to the embedding indicated in \eqref{embedding}, 
 we let the permutation $\left(\begin{smallmatrix} 
1 & 2 & 3 \\ 
 2 & 3 & 1 
 \end{smallmatrix}\right)$ be written as 
\begin{equation}\label{diagram231}
 \begin{tikzpicture}[scale = 0.5,thick, baseline={(0,-1ex/2)}] 
\tikzstyle{vertex} = [shape = circle, minimum size = 7pt, inner sep = 1pt] 
\node[vertex] (G--3) at (3.0, -1) [shape = circle, draw] {}; 
\node[vertex] (G-2) at (1.5, 1) [shape = circle, draw] {}; 
\node[vertex] (G--2) at (1.5, -1) [shape = circle, draw] {}; 
\node[vertex] (G-1) at (0.0, 1) [shape = circle, draw] {}; 
\node[vertex] (G--1) at (0.0, -1) [shape = circle, draw] {}; 
\node[vertex] (G-3) at (3.0, 1) [shape = circle, draw] {}; 
\draw[] (G-2) .. controls +(0.75, -1) and +(-0.75, 1) .. (G--3); 
\draw[] (G-1) .. controls +(0.75, -1) and +(-0.75, 1) .. (G--2); 
\draw[] (G-3) .. controls +(-1, -1) and +(1, 1) .. (G--1); 
\end{tikzpicture}. 
\end{equation}
 If we visualize the block $\{ 2, 3' \}$ being removed and placed on the right of a resultant configuration, by analogy with how the permutation \eqref{pLnR} 
 gets mapped to the right-hand side of \eqref{sLsRn} under the application of West's stack-sorting map, 
 there is a clear way how this removal ``splits'' the permutation diagram in \eqref{diagram231}
 into a left and a right configuration by direct analogy with 
 \eqref{pLnR}, and part of the purpose of our procedure in Section \ref{subsectionprocedure}
 is to formalize this idea in a way that may be extended to all partition diagrams. 
 In contrast, if we remove, for example, the rightmost block of 
\begin{equation}\label{clearillustrationmain}
\begin{tikzpicture}[scale = 0.5,thick, baseline={(0,-1ex/2)}] 
\tikzstyle{vertex} = [shape = circle, minimum size = 7pt, inner sep = 1pt] 
\node[vertex] (G--9) at (12.0, -1) [shape = circle, draw,fill=red] {}; 
\node[vertex] (G--8) at (10.5, -1) [shape = circle, draw,fill=red] {}; 
\node[vertex] (G--7) at (9.0, -1) [shape = circle, draw,fill=red] {}; 
\node[vertex] (G-5) at (6.0, 1) [shape = circle, draw,fill=red] {}; 
\node[vertex] (G-8) at (10.5, 1) [shape = circle, draw,fill=red] {}; 
\node[vertex] (G-9) at (12.0, 1) [shape = circle, draw,fill=red] {}; 
\node[vertex] (G--6) at (7.5, -1) [shape = circle, draw,fill=cyan] {}; 
\node[vertex] (G--5) at (6.0, -1) [shape = circle, draw,fill=cyan] {}; 
\node[vertex] (G--4) at (4.5, -1) [shape = circle, draw,fill=cyan] {}; 
\node[vertex] (G-1) at (0.0, 1) [shape = circle, draw,fill=cyan] {}; 
\node[vertex] (G-2) at (1.5, 1) [shape = circle, draw,fill=cyan] {}; 
\node[vertex] (G-3) at (3.0, 1) [shape = circle, draw,fill=cyan] {}; 
\node[vertex] (G--3) at (3.0, -1) [shape = circle, draw,fill=green] {}; 
\node[vertex] (G--2) at (1.5, -1) [shape = circle, draw,fill=green] {}; 
\node[vertex] (G--1) at (0.0, -1) [shape = circle, draw,fill=green] {}; 
\node[vertex] (G-4) at (4.5, 1) [shape = circle, draw,fill=green] {}; 
\node[vertex] (G-6) at (7.5, 1) [shape = circle, draw,fill=green] {}; 
\node[vertex] (G-7) at (9.0, 1) [shape = circle, draw,fill=green] {}; 
\draw[] (G-5) .. controls +(0.7, -0.7) and +(-0.7, -0.7) .. (G-8); 
\draw[] (G-8) .. controls +(0.5, -0.5) and +(-0.5, -0.5) .. (G-9); 
\draw[] (G-9) .. controls +(0, -1) and +(0, 1) .. (G--9); 
\draw[] (G--9) .. controls +(-0.5, 0.5) and +(0.5, 0.5) .. (G--8); 
\draw[] (G--8) .. controls +(-0.5, 0.5) and +(0.5, 0.5) .. (G--7); 
\draw[] (G--7) .. controls +(-1, 1) and +(1, -1) .. (G-5); 
\draw[] (G-1) .. controls +(0.5, -0.5) and +(-0.5, -0.5) .. (G-2); 
\draw[] (G-2) .. controls +(0.5, -0.5) and +(-0.5, -0.5) .. (G-3); 
\draw[] (G-3) .. controls +(1, -1) and +(-1, 1) .. (G--6); 
\draw[] (G--6) .. controls +(-0.5, 0.5) and +(0.5, 0.5) .. (G--5); 
\draw[] (G--5) .. controls +(-0.5, 0.5) and +(0.5, 0.5) .. (G--4); 
\draw[] (G--4) .. controls +(-1, 1) and +(1, -1) .. (G-1); 
\draw[] (G-4) .. controls +(0.6, -0.6) and +(-0.6, -0.6) .. (G-6); 
\draw[] (G-6) .. controls +(0.5, -0.5) and +(-0.5, -0.5) .. (G-7); 
\draw[] (G-7) .. controls +(-1, -1) and +(1, 1) .. (G--3); 
\draw[] (G--3) .. controls +(-0.5, 0.5) and +(0.5, 0.5) .. (G--2); 
\draw[] (G--2) .. controls +(-0.5, 0.5) and +(0.5, 0.5) .. (G--1); 
\draw[] (G--1) .. controls +(1, 1) and +(-1, -1) .. (G-4); 
\end{tikzpicture}, 
\end{equation}
 where the above colouring is to distinguish the blocks in the partition diagram  depicted, 
 then the block highlighted in green is not strictly to the left of the rightmost block, 
 since there are green nodes to the left and right of a red node in the upper row. 
\end{example}

 To lift the recursive definition for $s$ indicated in \eqref{sLsRn} so as to allow members of 
 $\mathscr{P}_{n}$ as the input for an extension or lifting of $s$, 
 and to deal with the matter of ambiguity explained in Example \ref{exampleambiguity}, 
 we would want to mimic \eqref{sLsRn} by allowing the possibility of ``middle'' 
 configurations, as opposed to the left-right dichotomy indicated in \eqref{pLnR}
 and \eqref{sLsRn}. We formalize this idea in Section \ref{subsectionprocedure}. 

\subsection{A procedure for stack-sorting partition diagrams}\label{subsectionprocedure}
 Define $\mathscr{S}_{n}\colon \mathscr{P}_{n} \to \mathscr{P}_{n}$ according to the following procedure, for an arbitrary partition diagram $\pi$ 
 in $\mathscr{P}_{n}$. 
 We order the bottom nodes of a given partition diagram in $\mathscr{P}_{n}$
 in the natural way, with $1' < 2' < \cdots < n'$. 
 For the sake of convenience, we may write $\mathscr{S} = \mathscr{S}_{n}$. 

\begin{enumerate}

\item If there are no propagating blocks in $\pi$, skip the below steps involving propagating blocks. 

\item Take the largest bottom node of $\pi$ that is part of a propagating block $B$. 
 This propagating block separates $\pi$ into three (possibly empty) classes of configurations according to how 
 the top nodes of $\pi$ are separated by removing $B$. 
 Explicitly, we define $\mathscr{L}$, $\mathscr{M}_{1}$, $\mathscr{M}_{2}$, $\ldots$, $\mathscr{M}_{\mu}$, 
 and $\mathscr{R}$ as follows, by direct analogy with \eqref{pLnR}. 
 We may denote $\mathscr{L}$, $\mathscr{M}_{i}$, and $\mathscr{R}$ as diagrams in $\mathscr{P}_{n}$. 
 The blocks of $\mathscr{L}$ (resp.\ $\mathscr{R}$) consist of either: 

$\bullet$ Any blocks of $\pi$ with upper nodes 
 that are all strictly to the left (resp.\ right) 
 of all of the upper nodes of $B$; 

$\bullet$ Any non-propagating blocks of $\pi$ on the bottom row of $\pi$ 
 with nodes 
 that are all strictly to the left (resp.\ right) 
 of all of the lower nodes of $B$; or 

$\bullet$ Singleton blocks. 

The blocks of expressions of the form $\mathscr{M}_{i}$ consist of either: 

$\bullet$ Any blocks of $\pi$ that do not satisfy either of the first two bullet points listed above; or 

$\bullet$ Singleton blocks. 

 We order $\mathscr{M}_{1} < \mathscr{M}_{2} < \cdots < \mathscr{M}_{\mu}$ according to the 
 ordering of the minimal elements, subject the ordering whereby $1' < 2' < \cdots < n' < 1 < 2 < \cdots < n$. 
 We then let $B'$ denote the partition diagram obtained from $B$ by adding any singleton nodes 
 so as to form a partition diagram in $\mathscr{P}_{n}$. 
 With this setup, we write 
\begin{equation}\label{maindecomposition}
 \mathscr{S}(\pi) = \mathscr{S}(\mathscr{L}) \odot \mathscr{S}(\mathscr{M}_{1}) 
 \odot \cdots \odot \mathscr{S}(\mathscr{M}_{\mu}) \odot \mathscr{S}(\mathscr{R}) \odot B', 
\end{equation}
 where the associative binary relation $ \odot $ is to later be defined. 

\item Repeatedly apply the above step wherever possible, i.e., to the expressions 
 in or derived from \eqref{maindecomposition}
 given by $\mathscr{S}$ evaluated at a partition diagram. 

\item The above steps yield a $\odot$-product of expressions of the forms 
\begin{equation}\label{unorderedodot}
 \mathscr{S}(N_{1}), \cdots, \mathscr{S}(N_{m_{1}}) \ \ \ \text{and} \ \ \ 
 B_{1}', \cdots, B_{m_{2}}', 
\end{equation}
 not necessarily in this order, for non-propagating diagrams $N_{i}$, 
 and where each expression of the form $B_{j}'$ is a partition diagram with exactly one propagating block
 and with singleton blocks anywhere else. 
 The factors of the aforementioned $\odot$-product
 indicated in \eqref{unorderedodot} are ordered in the following way: 
 $B_{j}'$ is the $j^{\text{th}}$ 
 factor of the form $B_{\kappa}'$ appearing in this $\odot$-product, 
 and $\mathscr{S}(N_{i})$ is the $i^{\text{th}}$ factor of the form 
 $\mathscr{S}(N_{\kappa})$ appearing in this $\odot$-product. 
 The operation $\odot$ indicates that the following is to be applied. 
 We label the top nodes in the propagating block in $B_{1}'$
 from left to right with consecutive integers starting with $1$, 
 and we then label the top nodes in the propagating block in $B_{2}'$ 
 with consecutive integers (starting with $1$ plus the number of top nodes in the propagating block in $B_{1}'$), 
 and we continue in this manner. We then continue with this labeling, by 
 labeling any non-propagating blocks of size greater than $1$ 
 in the top row of the $N_{i}$-expressions, in order of the nodes as they appear 
 among consecutive $N_{i}$-diagrams. If there are any unused labels for the top row, label 
 singleton blocks in the upper row with these leftover labels. 
 However, for the bottom nodes of any non-singleton blocks from $\pi$, we let these bottom nodes keep their original labelings 
 (and if necessary we add in singleton blocks in a bottom row to form a partition diagram based on the preceding steps). 

\item As indicated above, the above steps produce an element in $\mathscr{P}_{n}$. We set $\mathscr{S}(\pi)$ 
 to be this element. 

\end{enumerate}

\begin{example}\label{mainillustration}
 Let $\pi$ denote the partition diagram 
\begin{equation}\label{20221205617PM2A}
 $$ according to our sorting map. 
\end{example}

\subsection{Sorting permuting diagrams} 
 Since we claim that our sorting map 
 $\mathscr{S}\colon \mathscr{P}_{n} \to \mathscr{P}_{n}$ lifts \eqref{smap}, 
 it would be appropriate to formalize and prove this claim, as in Theorem \ref{theoremlift}
 below and our proof of this Theorem. We proceed to briefly review some preliminaries 
 concerning Theorem \ref{theoremlift}. 

 Our convention for mapping permutations to permutation diagrams, as indicated in 
 \eqref{embedding}, is to be used consistently throughout our article. 
 Again, this convention is consistent with the notation for West's stack-sorting map indicated in 
 \eqref{pLnR}, since it mimics two-line notation for permutations. 
 We are to extend, as below, this embedding so as to be applicable to expressions as in $L$ and $R$ in 
 the permutation decomposition shown in \eqref{pLnR}. 

 The \emph{rook algebra} is a subalgebra of $\mathscr{P}_{n}^{\xi}$ that is spanned by partial permutations. Partial permutations are diagrams that 
 consist of blocks of size $1$ and blocks of size $2$ that consist of a vertex in the top row and a vertex in the bottom row \cite{HalversonJacobson2020}. 
 Following \cite{Xiao2016}, the expression $\text{Rd}_{k}$ denotes the set of all rook $k$-diagrams.
 Similarly, for each $r \in \mathbb{Z}$ satisfying 
 $0 \leq r \leq k$, the expression $\text{Rd}_{k}[r]$ 
 denotes the set of rook $k$-diagrams with precisely $r$ singleton vertices in each row. 
 Given a permutation decomposition of the form indicated in \eqref{pLnR}, 
 we can identify $L$ (resp.\ $R$) with a partial permutation 
 such that the primed elements in any $2$-blocks in this partial permutation 
 are the primed versions of any numbers in $L$ (resp.\ $R$) 
 and in such a way so as to agree with the two-line notation indicated in \eqref{pLnR}. 
 Explicitly, if $L$ is empty, we let it be mapped to the partition diagram in $\mathscr{P}_{n}$
 consisting of singleton blocks, and if we write $L = l_{1} l_{2} \cdots l_{\ell(L)}$, 
 we may identify $L$ with the partition diagram with $2$-blocks of the forms 
 $$ \{ p^{-1}(l_{1}), l_{1}' \}, \{ p^{-1}(l_{2}), l_{2}' \}, 
 \ldots, \{ p^{-1}(l_{\ell(L)}), l_{\ell(L)}' \} $$
 and with singleton blocks everywhere else, and similarly for $R$. 
 This agrees with our convention indicated in \eqref{embedding} 
 for embedding $S_{n}$ into $\mathscr{P}_{n}$. 

 Again with reference to the permutation decomposition in \eqref{pLnR}
 we may identify the expression $n$ with a partial permutation and 
 in a similar fashion as above, with the understanding that this expression 
 is part of the concatenation in \eqref{pLnR}. 
 Explicitly, we would identify $n$ with the partition diagram given by the set-partition
\begin{align}
 & \{ \{ p^{-1}(n), n' \} \} \cup \label{202212051128100perPM1A} \\
 & \{ \{ x \} : x \in \mathbb{N}, 1 \leq x \leq n, x \neq p^{-1}(n) \} \cup \label{202212051128100perPM2A} \\
 & \{ \{ x \} : x \in \mathbb{N}, 1 \leq x \leq n, x \neq n \}. \label{202212051128100perPM3A} 
\end{align}

 As illustrated in Example \ref{mainillustration}, for permutation diagrams 
 $\pi_{1}$ and $\pi_{2}$, if $\pi_{2}$ consists entirely of singleton nodes then
 $\mathscr{S}(\pi_{1}) \odot \mathscr{S}(\pi_{2}) = \mathscr{S}(\pi_{2}) \odot \mathscr{S}(\pi_{1}) 
 = \mathscr{S}(\pi_{1})$. This algebraic property is to be used in our proof of Theorem \ref{theoremlift}. 

\begin{theorem}\label{theoremlift}
 Let $p$ be a permutation in $S_{n}$, and let $\pi$ denote the corresponding partition diagram 
 according to \eqref{embedding}. Then $\mathscr{S}(\pi) $ equals the partition diagram 
 corresponding to $s(p)$. 
\end{theorem}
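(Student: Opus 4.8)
The plan is to prove a statement slightly stronger than Theorem \ref{theoremlift}, namely that for \emph{any} partial permutation encoding a word $w$ with distinct entries (in the sense of the identification made just before the theorem), the map $\mathscr{S}$ returns the partial permutation encoding $s(w)$; Theorem \ref{theoremlift} is then the special case in which $w = p$ is a full permutation. I would argue by induction on the number of propagating blocks of the input, equivalently on the length of $w$. The base cases, in which there are zero or one propagating blocks, are immediate, since $s$ and $\mathscr{S}$ both fix the relevant singleton or single-block diagrams.

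For the inductive step, the key preliminary observation is that a (partial) permutation diagram has all of its non-singleton blocks propagating and of the form $\{\, i, p(i)' \,\}$, so that each such block carries exactly one top node and one bottom node. Consequently, when the procedure of Section \ref{subsectionprocedure} is applied, the largest bottom node lying in a propagating block is the primed maximal entry of $w$, and the distinguished block $B$ is the block containing it. Because every remaining block has a single top node, that node lies either strictly to the left or strictly to the right of the top node of $B$; hence there are no middle configurations, $\mu = 0$, and the decomposition \eqref{maindecomposition} collapses to
\begin{equation*}
 \mathscr{S}(\pi) = \mathscr{S}(\mathscr{L}) \odot \mathscr{S}(\mathscr{R}) \odot B',
\end{equation*}
in exact parallel with the recursion $s(p) = s(L)\,s(R)\,n$ recorded in \eqref{sLsRn}. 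Using the identification preceding Theorem \ref{theoremlift}, the blocks with top node to the left of that of $B$ are precisely those encoding the subword $L$ of $p = LnR$, so that $\mathscr{L}$ is the partial permutation encoding $L$, and likewise $\mathscr{R}$ encodes $R$; both have strictly fewer propagating blocks, so the (strengthened) inductive hypothesis applies to each.

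The heart of the argument is the analysis of the operation $\odot$ in Step (4) of the procedure. Carrying the recursion to completion produces a $\odot$-product whose factors are, up to inert diagrams consisting only of singletons, single-propagating-block diagrams $B_j'$ as in \eqref{unorderedodot}. Read from left to right, the bottom nodes of these blocks carry values in a definite order: by the inductive hypothesis the factors arising from $\mathscr{S}(\mathscr{L})$ present the entries of $s(L)$, those arising from $\mathscr{S}(\mathscr{R})$ present the entries of $s(R)$, and the final factor $B'$ presents the value $n$. Thus the left-to-right value sequence of the propagating blocks is exactly the one-line notation of $s(L)\,s(R)\,n = s(p)$. Since $\odot$ assigns the consecutive top labels $1, 2, \dots$ to these blocks from left to right while leaving every bottom label unchanged, the block occupying position $j$ becomes $\{\, j, (s(p)(j))' \,\}$, which is precisely the image of $s(p)$ under the embedding \eqref{embedding}.

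The main obstacle is the bookkeeping concentrated in the final paragraph: one must verify rigorously that the left-to-right value order of the propagating blocks output by the recursion coincides with the one-line notation of $s(p)$, and that the top-node relabeling performed by $\odot$ correctly re-coordinatizes the partial permutations $\mathscr{S}(\mathscr{L})$ and $\mathscr{S}(\mathscr{R})$ returned by the recursive calls. Here the associativity of $\odot$ together with the identity $\mathscr{S}(\pi_1) \odot \mathscr{S}(\pi_2) = \mathscr{S}(\pi_1)$, valid when $\pi_2$ consists only of singletons (noted immediately before the theorem), are what allow the inert singleton factors to be discarded and the three groups of propagating blocks to be concatenated without interference, so that the relabeling step reproduces the embedding \eqref{embedding} of $s(p)$ and no other diagram.
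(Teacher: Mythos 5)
Your proposal is correct and follows essentially the same route as the paper's proof: you observe that a (partial) permutation diagram produces no genuine middle configurations, collapse \eqref{maindecomposition} to $\mathscr{S}(\mathscr{L}) \odot \mathscr{S}(\mathscr{R}) \odot B'$, identify $\mathscr{L}$, $\mathscr{R}$, and $B'$ with the embeddings of $L$, $R$, and $n$ from \eqref{pLnR}, and conclude by induction in parallel with \eqref{sLsRn}. Your explicit strengthening to partial permutations encoding words with distinct entries, and your tracking of the bottom-value order through the flattened $\odot$-product in Step (4), simply make precise what the paper compresses into its rook-diagram identification of $L$ and $R$ and its closing phrase ``an inductive argument provides us with the desired result.''
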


\begin{proof}
 As above, we let $p \in S_{n}$, writing $p\colon \{ 1, 2, \ldots, n \} \to \{ 1, 2, \ldots, n \}$. 
 As above, we let $\pi$ denote the partition diagram corresponding to 
 \eqref{embedding}, i.e., so that the set of blocks of $\pi$ is \eqref{embedding}. 
 With respect to the notation in \eqref{maindecomposition}, 
 the expression $ \mathscr{S}(\mathscr{L})$ (resp.\ $\mathscr{S}(\mathscr{R})$) is equal to $\mathscr{S}$ evaluated 
 at a diagram obtained from $\pi$ by taking any blocks 
 consisting of a bottom node that is labeled the primed version 
 of a number to the left (resp.\ right) of $n$ in the sense indicated in \eqref{pLnR}, 
 i.e., a number in $L$ (resp.\ $R$) according to the notation in \eqref{pLnR} for a permutation $p \in S_{n}$. 
 Since $\pi$ is a permutation diagram,   any   $\mathscr{M}$-expression    consists    entirely of singleton nodes.   
   So, the product in \eqref{maindecomposition}    reduces to   
\begin{equation}\label{202212100100051110PM1A}
 \mathscr{S}(\pi) = \mathscr{S}(\mathscr{L}) \odot \mathscr{S}(\mathscr{R}) \odot B'. 
\end{equation}
 As indicated above, $\mathscr{L}$ (resp.\ $\mathscr{R}$) is precisely the partition diagram given by 
 embedding $L$ (resp.\ $R$) into $\mathscr{P}_{n}$. Similarly, $B'$
 is precisely the partition diagram given by embedding the factor $n$ in \eqref{pLnR}, 
 in the manner indicated in \eqref{202212051128100perPM1A}--\eqref{202212051128100perPM3A}. 
 So, letting it be understood that the factors in 
 \eqref{pLnR} and the left-hand side of \eqref{pLnR} 
 may be identified with their respective embeddings, 
 we find that the $\odot$-product in \eqref{202212100100051110PM1A} may be written as 
\begin{equation*}
 \mathscr{S}(p) = \mathscr{S}(L) \odot \mathscr{S}(R) \odot n. 
\end{equation*}
 By comparing both sides of the above decomposition with both sides of 
 the decomposition in \eqref{sLsRn}, 
 an inductive argument provides us with the desired result. 
\end{proof}

 Since we have lifted West's stack-sorting map so as to allow partition diagrams
 in addition to permuting diagrams as input, 
 this leads us to consider how fundamental properties concerning the $s$-map 
 \eqref{smap} may be ``translated'' according to our lifting. 
 In particular, we are led to desire to generalize Knuth's classic result 
 that a permutation is $1$-stack-sortable iff it avoids the pattern 231. 

The intricate combinatorial structures and behaviours associated with our    lifting $\mathscr{S}$   
    of West's stack-sorting map are investigated in Section \ref{sectionPattern} below. 
 Such investigations are inspired by the extent of mathematical interest 
 concerning the beautiful combinatorics, at both a structural and enumerative level, associated with an important variant of the $s$-map 
 referred to as \emph{pop-stack sorting} for permutations, 
 with reference to the work of Asinowski et al.\ \cite{AsinowskiBanderierBilleyHacklLinusson2019}. 

\section{Pattern avoidance}\label{sectionPattern}
 We again let $p \in S_{n}$ be written as a mapping in the manner indicated in \eqref{pcolon}. In the context of the study of pattern avoidance in 
 permutations, it is often desirable to denote $p$ using an $n \times n$ grid in the Cartesian plane. 
 For example, if we begin by letting a permutation $p \in S_{3}$
 be denoted in the manner indicated in \eqref{pLnR}, 
 let us write, for example $p = 312$, which may be denoted using the $n \times n$ grid below. 
$$\begin{array}{|ccc|}
 \hline
 \bullet & \null & \null \\ 
 \null & \null & \bullet \\ 
 \null & \bullet & \null \\ 
 \hline
 \end{array}
$$
 More generally, we may denote a permutation $p = p(1) p(2) \cdots p(n)$
 with an $n \times n$ array such that an $(i, j)$-entry is nonempty 
 if and only if $(i, j)$ is of the form $(k, n - p^{-1}(k) +1 )$ for some index $k$, 
 and where an $(k, n - p^{-1}(k) +1 )$-entry is highlighted in some way, as above. 
 We say that a permutation $p = p(1) p(2) \cdots p(n)$ contains the pattern $231$
 if there exist indices $k_{1}$, $k_{2}$, and $k_{3}$ such that $k_{1} < k_{2} < k_{3}$
 and such that the entries 
\begin{align}
 (k_{1}, n - p^{-1}(k_{1}) +1 ), \label{kayy1} \\
 (k_{2}, n - p^{-1}(k_{2}) +1 ), \label{kayy2} \\
 (k_{3}, n - p^{-1}(k_{3}) +1 ), \label{kayy3} 
\end{align}
 satisfy the following in the $n \times n$ array we use to denote $n$: 
 Point \eqref{kayy1} is strictly below \eqref{kayy2} and is strictly above point \eqref{kayy3}. 
 In other words, the $n \times n$ array corresponding to $p$ contains a pattern 
 that is equivalent, in the sense that we have specified, to the following configuration. 
$$\begin{array}{|ccc|}
 \hline
 \null & \bullet & \null \\ 
 \bullet & \null & \null \\ 
 \null & \null & \bullet \\ 
 \hline
 \end{array}
$$
 A permutation is said to \emph{avoid} the pattern $231$ if it does not contain this pattern. 
 In a similar fashion, we may characterize or define a permutation that avoids the pattern $12$
 as a \emph{decreasing} permutation, i.e., a permutation of the following form. 
$$\begin{array}{|cccc|}
 \hline
 \bullet & \null & \null & \null \\ 
 \null & \bullet & \null & \null \\ 
 \null & \null & \ddots & \null \\ 
 \null & \null & \null & \bullet \\ 
 \hline
 \end{array}
$$
 Correspondingly, an \emph{increasing} permutation is of the form shown below, i.e., it is an identity permutation. 
$$\begin{array}{|cccc|}
 \hline
 \null & \null & \null & \bullet \\ 
 \null & \null & \cdot^{\displaystyle\cdot^{\displaystyle\cdot}} & \null \\ 
 \null & \bullet & \null & \null \\ 
 \bullet & \null & \null & \null \\ 
 \hline
 \end{array}
$$
 A permutation $p$ is said to be \emph{$t$-stack-sortable} if 
\[   \underbrace{s \circ s \circ \cdots \circ s}_{t}(p)    \]  
    is increasing; see \cite{Defant2021Discrete,DefantKravitz2020} for related research that has inspired much about this article.  

    The concept of a $231$-avoiding  permutation is of considerable interest for the purposes of this article,     so it is worthwhile to illustrate a permutation of this   
  form.    In this regard, we see that the permutation corresponding to   
\begin{equation}\label{bulletsavoid231}
\begin{array}{|cccccc|}
 \hline
 \null & \null & \null & \null & \null & \bullet \\ 
 \bullet & \null & \null & \null & \null & \null \\ 
 \null & \bullet & \null & \null & \null & \null \\ 
 \null & \null & \bullet & \null & \null & \null \\ 
 \null & \null & \null & \bullet & \null & \null \\ 
 \null & \null & \null & \null & \bullet & \null \\ 
 \hline
 \end{array}
\end{equation}
 avoids the pattern $231$. So, according to Knuth's characterization of $1$-stack-sortable permutations, we would expect  
 the permutation illustrated in \eqref{bulletsavoid231}  to be 
   $1$-stack-sortable. It is worthwhile for our purposes to illustrate this. 

\begin{example}
 Being consistent with the notation in \eqref{pLnR}, the permutation $p \in S_{6}$
 depicted in \eqref{bulletsavoid231} may be written as $543216$. 
 According to the recursion shown in \eqref{sLsRn} for West's stack-sorting map, 
 we obtain the following: 
\begin{align*}
 s(p) & = s(543216) \\
 & = s(54321) 6 \\ 
 & = s(4321) 5 6 \\ 
 & = s(321) 4 5 6 \\ 
 & = s(21) 3 4 5 6 \\ 
 & = s(1) 2 3 4 5 6 \\ 
 & = 1 2 3 4 5 6. 
\end{align*}
 So, since $s(p)$ is the identity permutation in $S_{6}$, we have that $p$ is $1$-stack-sortable. 
\end{example}

 The main goal of our current section is to generalize the following groundbreaking result due to Knuth, 
 which, as indicated in \cite{Defant2021Discrete}, was the starting point 
 for the study of both stack-sorting and pattern avoidance in permutations. 

\begin{theorem}\label{theoremKnuth1973}
 (Knuth, 1973) A permutation is $1$-stack sortable iff it avoids $231$ (cf.\ \cite{Defant2021Discrete}). 
\end{theorem}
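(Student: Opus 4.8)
The plan is to prove the equivalence by induction on $n$, using the recursive definition \eqref{sLsRn} of $s$ together with a structural characterization of $231$-avoidance that mirrors the decomposition $p = LnR$ of \eqref{pLnR}. The engine of the argument is the observation that $s$ never changes which values appear in a given block: the set of entries of $s(L)$ equals that of $L$, and likewise for $R$. Consequently, since $s(p) = s(L)\,s(R)\,n$, the output $s(p)$ is increasing exactly when $s(L)$ is increasing, $s(R)$ is increasing, and the largest entry of $L$ is smaller than the smallest entry of $R$ (the final $n$ being automatically larger than everything preceding it). I would isolate this last separation requirement as the crux linking the sorting side to the pattern side.

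The first concrete step is a decomposition lemma for pattern avoidance: the permutation $p = LnR$ avoids $231$ if and only if both $L$ and $R$ avoid $231$ and every entry of $L$ is strictly smaller than every entry of $R$. To prove the lemma I would fix the position of the maximal entry $n$ and perform a case analysis on how the three positions of a hypothetical $231$ occurrence are distributed relative to that position. Because $n$ is the global maximum, whenever $n$ participates in a $231$ pattern it can only play the role of the ``$3$'', which forces an entry of $L$ to exceed a later entry of $R$; and any occurrence not using $n$ that straddles the two sides likewise produces an entry of $L$ dominating an entry of $R$. Thus the only ways to create a $231$ are an internal occurrence in $L$, an internal occurrence in $R$, or a violation of the separation condition $\max(L) < \min(R)$, which is precisely the content of the lemma.

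With the lemma in hand, the induction closes quickly. By the inductive hypothesis applied to the shorter sequences $L$ and $R$ (after standardizing each to a genuine permutation, since both $1$-stack-sortability and $231$-avoidance depend only on relative order), $s(L)$ is increasing iff $L$ avoids $231$, and similarly for $R$. Combining this with the value-preservation remark above, $s(p)$ is increasing iff $L$ avoids $231$, $R$ avoids $231$, and $\max(L) < \min(R)$; by the decomposition lemma this triple of conditions is equivalent to $p$ avoiding $231$. The base case $n \le 1$ is immediate, as the empty and length-one permutations are both increasing and vacuously $231$-avoiding.

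I expect the main obstacle to be the case analysis in the decomposition lemma, specifically the ``crossing'' configurations in which the three positions of a putative $231$ pattern are split between $L$ and $R$ without involving $n$ itself. One must check that every such crossing occurrence still forces $\max(L) > \min(R)$, and conversely that the separation condition together with internal avoidance really does rule out all occurrences; a careless enumeration here can miss a case (for instance, the split ``two positions in $L$, one in $R$'' versus ``one in $L$, two in $R$''). A secondary point requiring care is the bookkeeping that lets the induction be applied to $L$ and $R$: since these are subwords rather than permutations of an initial segment, I would phrase the inductive statement for arbitrary sequences of distinct values, or pass to standardizations, so that ``increasing'' and ``avoids $231$'' are unambiguous and stable under the recursion.
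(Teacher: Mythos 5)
Your proof is correct, but there is nothing in the paper to compare it against: Theorem \ref{theoremKnuth1973} is stated as a citation of Knuth's 1973 result (cf.\ \cite{Defant2021Discrete}) and the paper supplies no proof of it, so your argument stands on its own. What you give is the standard inductive proof, and all the pieces check out. The decomposition lemma is right: if some $a$ in $L$ exceeds some $b$ in $R$, then $a,n,b$ is itself an occurrence of $231$; conversely, under the separation condition $\max(L)<\min(R)$, the maximum $n$ can only play the ``3'' role (the middle position of the pattern), so any occurrence using $n$ or straddling $L$ and $R$ --- in either split, two-in-$L$/one-in-$R$ or one-in-$L$/two-in-$R$ --- forces an entry of $L$ to dominate a later entry of $R$, leaving only internal occurrences in $L$ or $R$, which are excluded by hypothesis. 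The sorting-side equivalence is also handled correctly: since $s$ permutes the entries of each block, $s(L)\,s(R)\,n$ is increasing exactly when $s(L)$ and $s(R)$ are increasing and $\max(L)<\min(R)$, and your decision to run the induction over sequences of distinct values (equivalently, to standardize $L$ and $R$) is precisely the bookkeeping needed to make the recursion \eqref{sLsRn} legitimate, since $L$ and $R$ in \eqref{pLnR} are not permutations of initial segments. It is worth noting that the separation mechanism at the heart of your lemma is the same phenomenon the paper later lifts to partition diagrams: the forbidden configurations in Theorem \ref{maintheorem}, where a block ordered between two others lands in an earlier $\odot$-factor of \eqref{maindecomposition} and produces a crossing, are the diagrammatic analogue of your observation that $\max(L)>\min(R)$ creates a $231$ via the block containing the maximum.
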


 So, in view of our lifting $\mathscr{S}$ of the mapping $s$, 
 what would be an appropriate way of generalizing Theorem \ref{theoremKnuth1973} 
 according to the mapping $\mathscr{S}$? In particular, how can the concept of 
 ``$1$-stack-sortability'' be translated in a meaningful way so as to be applicable with respect to $\mathscr{S}$? 
 To answer these questions, we are to make use of a morphism on partition algebras that we had previously
 applied in a representation-theoretic context \cite{Campbell2022}. 

\subsection{Stretch morphisms}
 To illustrate the problem of determining a suitable analogue of Theorem 
 \ref{theoremKnuth1973} according to our lifting $\mathscr{S}$ of West's stack-sorting map, 
 let us consider $\mathscr{S}$ evaluated at a permuting diagram that is $1$-stack-sortable. 

\begin{example}\label{trianglealign}
 We see that the permutation $312$ avoids the pattern $231$. So, let us consider $\mathscr{S}$ evaluated
 at the permutation diagram corresponding to $312$, as below: 
\begin{align}
 & \mathscr{S}\left( \begin{tikzpicture}[scale = 0.5,thick, baseline={(0,-1ex/2)}] 
\tikzstyle{vertex} = [shape = circle, minimum size = 7pt, inner sep = 1pt] 
\node[vertex] (G--3) at (3.0, -1) [shape = circle, draw] {}; 
\node[vertex] (G-1) at (0.0, 1) [shape = circle, draw] {}; 
\node[vertex] (G--2) at (1.5, -1) [shape = circle, draw] {}; 
\node[vertex] (G-3) at (3.0, 1) [shape = circle, draw] {}; 
\node[vertex] (G--1) at (0.0, -1) [shape = circle, draw] {}; 
\node[vertex] (G-2) at (1.5, 1) [shape = circle, draw] {}; 
\draw[] (G-1) .. controls +(1, -1) and +(-1, 1) .. (G--3); 
\draw[] (G-3) .. controls +(-0.75, -1) and +(0.75, 1) .. (G--2); 
\draw[] (G-2) .. controls +(-0.75, -1) and +(0.75, 1) .. (G--1); 
\end{tikzpicture} \right) = \label{firstlabeltriangle} \\ 
 & \mathscr{S}\left( \begin{tikzpicture}[scale = 0.5,thick, baseline={(0,-1ex/2)}] 
\tikzstyle{vertex} = [shape = circle, minimum size = 7pt, inner sep = 1pt] 
\node[vertex] (G--3) at (3.0, -1) [shape = circle, draw,fill=black] {}; 
\node[vertex] (G-1) at (0.0, 1) [shape = circle, draw,fill=black] {}; 
\node[vertex] (G--2) at (1.5, -1) [shape = circle, draw,fill=white] {}; 
\node[vertex] (G-3) at (3.0, 1) [shape = circle, draw,fill=white] {}; 
\node[vertex] (G--1) at (0.0, -1) [shape = circle, draw,fill=white] {}; 
\node[vertex] (G-2) at (1.5, 1) [shape = circle, draw,fill=white] {}; 
\draw[] (G-3) .. controls +(-0.75, -1) and +(0.75, 1) .. (G--2); 
\draw[] (G-2) .. controls +(-0.75, -1) and +(0.75, 1) .. (G--1); 
\end{tikzpicture} \right) \left( \begin{tikzpicture}[scale = 0.5,thick, baseline={(0,-1ex/2)}] 
\tikzstyle{vertex} = [shape = circle, minimum size = 7pt, inner sep = 1pt] 
\node[vertex] (G--3) at (3.0, -1) [shape = circle, draw] {}; 
\node[vertex] (G-1) at (0.0, 1) [shape = circle, draw] {}; 
\node[vertex] (G--2) at (1.5, -1) [shape = circle, draw,fill=black] {}; 
\node[vertex] (G-3) at (3.0, 1) [shape = circle, draw,fill=black] {}; 
\node[vertex] (G--1) at (0.0, -1) [shape = circle, draw,fill=black] {}; 
\node[vertex] (G-2) at (1.5, 1) [shape = circle, draw,fill=black] {}; 
\draw[] (G-1) .. controls +(1, -1) and +(-1, 1) .. (G--3); 
\end{tikzpicture} \right) = \nonumber \\ 
 & \mathscr{S}\left( \begin{tikzpicture}[scale = 0.5,thick, baseline={(0,-1ex/2)}] 
\tikzstyle{vertex} = [shape = circle, minimum size = 7pt, inner sep = 1pt] 
\node[vertex] (G--3) at (3.0, -1) [shape = circle, draw,fill=black] {}; 
\node[vertex] (G-1) at (0.0, 1) [shape = circle, draw,fill=black] {}; 
\node[vertex] (G--2) at (1.5, -1) [shape = circle, draw,fill=black] {}; 
\node[vertex] (G-3) at (3.0, 1) [shape = circle, draw,fill=black] {}; 
\node[vertex] (G--1) at (0.0, -1) [shape = circle, draw,fill=white] {}; 
\node[vertex] (G-2) at (1.5, 1) [shape = circle, draw,fill=white] {}; 
\draw[] (G-2) .. controls +(-0.75, -1) and +(0.75, 1) .. (G--1); 
\end{tikzpicture} \right) 
 \left( \begin{tikzpicture}[scale = 0.5,thick, baseline={(0,-1ex/2)}] 
\tikzstyle{vertex} = [shape = circle, minimum size = 7pt, inner sep = 1pt] 
\node[vertex] (G--3) at (3.0, -1) [shape = circle, draw,fill=black] {}; 
\node[vertex] (G-1) at (0.0, 1) [shape = circle, draw,fill=black] {}; 
\node[vertex] (G--2) at (1.5, -1) [shape = circle, draw,fill=white] {}; 
\node[vertex] (G-3) at (3.0, 1) [shape = circle, draw,fill=white] {}; 
\node[vertex] (G--1) at (0.0, -1) [shape = circle, draw,fill=black] {}; 
\node[vertex] (G-2) at (1.5, 1) [shape = circle, draw,fill=black] {}; 
\draw[] (G-3) .. controls +(-0.75, -1) and +(0.75, 1) .. (G--2); 
\end{tikzpicture} \right) \left( \begin{tikzpicture}[scale = 0.5,thick, baseline={(0,-1ex/2)}] 
\tikzstyle{vertex} = [shape = circle, minimum size = 7pt, inner sep = 1pt] 
\node[vertex] (G--3) at (3.0, -1) [shape = circle, draw] {}; 
\node[vertex] (G-1) at (0.0, 1) [shape = circle, draw] {}; 
\node[vertex] (G--2) at (1.5, -1) [shape = circle, draw,fill=black] {}; 
\node[vertex] (G-3) at (3.0, 1) [shape = circle, draw,fill=black] {}; 
\node[vertex] (G--1) at (0.0, -1) [shape = circle, draw,fill=black] {}; 
\node[vertex] (G-2) at (1.5, 1) [shape = circle, draw,fill=black] {}; 
\draw[] (G-1) .. controls +(1, -1) and +(-1, 1) .. (G--3); 
\end{tikzpicture} \right). \nonumber 
\end{align}
 Following through with the procedure introduced in Section \ref{sectionlift}, 
 we obtain that identity permutation diagram in $\mathscr{P}_{3}$ shown below. 
 $$ \begin{tikzpicture}[scale = 0.5,thick, baseline={(0,-1ex/2)}] 
\tikzstyle{vertex} = [shape = circle, minimum size = 7pt, inner sep = 1pt] 
\node[vertex] (G--3) at (3.0, -1) [shape = circle, draw] {}; 
\node[vertex] (G-3) at (3.0, 1) [shape = circle, draw] {}; 
\node[vertex] (G--2) at (1.5, -1) [shape = circle, draw] {}; 
\node[vertex] (G-2) at (1.5, 1) [shape = circle, draw] {}; 
\node[vertex] (G--1) at (0.0, -1) [shape = circle, draw] {}; 
\node[vertex] (G-1) at (0.0, 1) [shape = circle, draw] {}; 
\draw[] (G-3) .. controls +(0, -1) and +(0, 1) .. (G--3); 
\draw[] (G-2) .. controls +(0, -1) and +(0, 1) .. (G--2); 
\draw[] (G-1) .. controls +(0, -1) and +(0, 1) .. (G--1); 
\end{tikzpicture} $$ 
\end{example}

 From Theorem \ref{theoremlift} together with Example \ref{trianglealign}, given a partition diagram $\pi \in \mathscr{P}_{n}$, to generalize the notion 
 of $1$-stack-sortability in an applicable and meaningful way, it would be appropriate to use a condition whereby $\mathscr{S}(\pi)$ belongs to a fixed class of 
 generalizations of the multiplicative identity element 
\begin{equation}\label{arbitraryid}
 \underbrace{ \begin{tikzpicture}[scale = 0.5,thick, baseline={(0,-1ex/2)}] 
\tikzstyle{vertex} = [shape = circle, minimum size = 7pt, inner sep = 1pt] 
\node[vertex] (G--2) at (1.5, -1) [shape = circle, draw] {}; 
\node[vertex] (G-2) at (1.5, 1) [shape = circle, draw] {}; 
\node[vertex] (G--1) at (0.0, -1) [shape = circle, draw] {}; 
\node[vertex] (G-1) at (0.0, 1) [shape = circle, draw] {}; 
\draw[] (G-2) .. controls +(0, -1) and +(0, 1) .. (G--2); 
\draw[] (G-1) .. controls +(0, -1) and +(0, 1) .. (G--1); 
\end{tikzpicture}
 \cdots \begin{tikzpicture}[scale = 0.5,thick, baseline={(0,-1ex/2)}] 
\tikzstyle{vertex} = [shape = circle, minimum size = 7pt, inner sep = 1pt] 
\node[vertex] (G--1) at (0.0, -1) [shape = circle, draw] {}; 
\node[vertex] (G-1) at (0.0, 1) [shape = circle, draw] {}; 
\draw[] (G-1) .. controls +(0, -1) and +(0, 1) .. (G--1); 
\end{tikzpicture}}_{n} 
\end{equation} 
 in $\mathscr{P}_{n}$. This leads us to apply, as below, a morphism on partition algebras 
 introduced in a representation-theoretic context in \cite{Campbell2022}. 

 Since our lifting of West's stack-sorting map applies to partition diagrams in general, as opposed to, say, rook diagrams, it would be appropriate to allow 
 non-rook diagrams in our lifting of the concept of $1$-stack sortability. However, our lifting of $s\colon S_{n} \to S_{n}$ is such that it preserves the sizes of 
 blocks in a partition diagram (but not necessarily the arrangement or ordering of the blocks). 
 So, it would be appropriate to let a partition diagram be mapped, under $\mathscr{S}$, 
 to a generalization of \eqref{arbitraryid} allowing for the possibility of blocks other than $2$-blocks. 
 In this regard, we are to employ what is referred to as the \emph{Stretch} morphism for partition algebras, 
 as introduced in \cite{Campbell2022}. 

 Let $S$ be a finite set of natural numbers. Let $\pi$ be a set-partition of $S \cup S'$.
 Let $k$ be a natural number such that $k \geq \max(S)$. 
 Following \cite{Campbell2022}, we write $\delta_{k}(\pi)$
 to denote the diagram basis element in $\mathscr{P}_{n}^{\xi}$
 corresponding to the set-partition given by adding blocks of the form $\{ i, i' \}$
 to $\pi$, where $i$ is a natural number such that $i \not\in S$ and $i \leq k$. 
 Again, following \cite{Campbell2022}, 
 we let 
 $\alpha = (\alpha_{1}, \alpha_{2}, \ldots, \alpha_{ \ell(\alpha) })$ be a set-composition of a finite set of natural numbers 
 (referring to \cite{Campbell2022} for details on combinatorial terms related to Stretch morphisms), 
 and we write 
 $m = \ell(\alpha)$, 
 and set $k \geq \max \left( \bigcup \alpha \right)$. As in \cite{Campbell2022}, we define 
\begin{equation}\label{Stretchfromto}
 \text{{Stretch}}_{\alpha, k}\colon \mathscr{P}_{m}^{\xi} \to \mathscr{P}_{k}^{\xi} 
\end{equation}
 in the following manner. 

 Let $d_{\pi}$ be a 
 member of the diagram basis of $\mathscr{P}_{m}^{\xi}$. Let us write 
 $\pi = \{ \pi_{1}, \pi_{2}, \ldots, \pi_{\ell(\pi)} \}$. Then 
\begin{equation}\label{Stretchdelta}
 \text{{Stretch}}_{\alpha, k}(d_{\pi}) 
 = \delta_k \Bigg(\Bigg\{ \ \raisebox{0.5ex}{$ \displaystyle 
 \bigcup_{\substack{ i \in \pi_j \\ \text{$i$ is unprimed} }} \alpha_{i} 
 \cup \bigcup_{i' \in \pi_j} \alpha_i' : 1 \leq j \leq \ell(\pi) $} \ \Bigg\}\Bigg). 
\end{equation}
 We may extend this definition linearly as in \cite{Campbell2022} 
 so as to obtain an algebra homormophism, but this is not important for our purposes. 

 An interesting feature concerning the problem of generalizing $1$-stack-sortability by generalizing \eqref{smap} so as to allow partition diagrams as the 
 arguments of   a  lifting/extension of  the  
       $s$-map may be explained as follows. By enforcing different conditions on how \eqref{arbitraryid} 
 should be generalized in order to generalize $1$-stack-sortability, say, in the context of a given application, 
 this gives rise to different families of combinatorial objects in the study of 
 the classification of partition diagrams that reduce to a specified generalization of 
 \eqref{arbitraryid}, under the application of $\mathscr{S}$. 

\begin{definition}\label{definitionsss}
 We say that a partition diagram $\pi$ is \emph{stretch-stack-sortable} 
 if $\mathscr{S}(\pi)$ is equal to a Stretch map 
 evaluated at an identity permutation diagram. 
\end{definition}

 To begin with, we illustrate the effect of applying \eqref{Stretchdelta} to the multiplicative identity elements in a partition algebra. 

\begin{example}
 With regard to the notation in \eqref{Stretchdelta}, we let $d_{\pi}$ be the partition diagram corresponding to 
\begin{equation}\label{id4}
 \pi = \{ \{ 1, 1' \}, \{ 2, 2' \}, \{ 3, 3' \}, \{ 4, 4' \} \}
\end{equation} 
 Define $\alpha$ as the set-composition $( \{ 1, 2 \}$, $ \{ 3 \}$, $ \{ 5, 6, 7 \}$, $ \{ 4 \} )$. 
 Observe that the length of $\alpha$ equals the order of $d_{\pi}$, in accordance with the definition in 
 \eqref{Stretchdelta}. 
 Let us consider the $j = 3$ case within the family of the argument of $\delta_{k}$ indicated in 
 \eqref{Stretchdelta}, letting the elements in $\pi$
 be ordered in the manner we have written these elements in \eqref{id4}. 
 So, the element corresponding to this $j = 3$ case is 
\begin{align*}
 & \bigcup_{\substack{ i \in \pi_3 \\ \text{$i$ is unprimed} }} \alpha_{i} \cup \bigcup_{i' \in \pi_3} \alpha_i' = 
 \alpha_{3} \cup \alpha_{3}' = 
 \{ 5, 6, 7, 5', 6', 7' \}. 
\end{align*}
 Continuing similarly with respect to the other possible values for the $j$-index, 
 we obtain that $\text{Stretch}_{\alpha, 7}$ evaluated at $d_{\pi}$ is 
 equal to the following. 
$$ \begin{tikzpicture}[scale = 0.5,thick, baseline={(0,-1ex/2)}] 
\tikzstyle{vertex} = [shape = circle, minimum size = 7pt, inner sep = 1pt] 
\node[vertex] (G--7) at (9.0, -1) [shape = circle, draw] {}; 
\node[vertex] (G--6) at (7.5, -1) [shape = circle, draw] {}; 
\node[vertex] (G--5) at (6.0, -1) [shape = circle, draw] {}; 
\node[vertex] (G-5) at (6.0, 1) [shape = circle, draw] {}; 
\node[vertex] (G-6) at (7.5, 1) [shape = circle, draw] {}; 
\node[vertex] (G-7) at (9.0, 1) [shape = circle, draw] {}; 
\node[vertex] (G--4) at (4.5, -1) [shape = circle, draw] {}; 
\node[vertex] (G-4) at (4.5, 1) [shape = circle, draw] {}; 
\node[vertex] (G--3) at (3.0, -1) [shape = circle, draw] {}; 
\node[vertex] (G-3) at (3.0, 1) [shape = circle, draw] {}; 
\node[vertex] (G--2) at (1.5, -1) [shape = circle, draw] {}; 
\node[vertex] (G--1) at (0.0, -1) [shape = circle, draw] {}; 
\node[vertex] (G-1) at (0.0, 1) [shape = circle, draw] {}; 
\node[vertex] (G-2) at (1.5, 1) [shape = circle, draw] {}; 
\draw[] (G-5) .. controls +(0.5, -0.5) and +(-0.5, -0.5) .. (G-6); 
\draw[] (G-6) .. controls +(0.5, -0.5) and +(-0.5, -0.5) .. (G-7); 
\draw[] (G-7) .. controls +(0, -1) and +(0, 1) .. (G--7); 
\draw[] (G--7) .. controls +(-0.5, 0.5) and +(0.5, 0.5) .. (G--6); 
\draw[] (G--6) .. controls +(-0.5, 0.5) and +(0.5, 0.5) .. (G--5); 
\draw[] (G--5) .. controls +(0, 1) and +(0, -1) .. (G-5); 
\draw[] (G-4) .. controls +(0, -1) and +(0, 1) .. (G--4); 
\draw[] (G-3) .. controls +(0, -1) and +(0, 1) .. (G--3); 
\draw[] (G-1) .. controls +(0.5, -0.5) and +(-0.5, -0.5) .. (G-2); 
\draw[] (G-2) .. controls +(0, -1) and +(0, 1) .. (G--2); 
\draw[] (G--2) .. controls +(-0.5, 0.5) and +(0.5, 0.5) .. (G--1); 
\draw[] (G--1) .. controls +(0, 1) and +(0, -1) .. (G-1); 
\end{tikzpicture} $$
\end{example}

 Now, let us consider an illustration of a partition diagram satisfying the conditions of 
 Definition \ref{definitionsss}. 

\begin{example}\label{exwith112}
 According to the procedure in Section \ref{sectionlift}, we may verify the evaluation
\begin{equation}\label{20221208415TABTABTYABAM1A}
 \mathscr{S}\left( \begin{tikzpicture}[scale = 0.5,thick, baseline={(0,-1ex/2)}] 
\tikzstyle{vertex} = [shape = circle, minimum size = 7pt, inner sep = 1pt] 
\node[vertex] (G--4) at (4.5, -1) [shape = circle, draw] {}; 
\node[vertex] (G-1) at (0.0, 1) [shape = circle, draw] {}; 
\node[vertex] (G--3) at (3.0, -1) [shape = circle, draw] {}; 
\node[vertex] (G--2) at (1.5, -1) [shape = circle, draw] {}; 
\node[vertex] (G-3) at (3.0, 1) [shape = circle, draw] {}; 
\node[vertex] (G-4) at (4.5, 1) [shape = circle, draw] {}; 
\node[vertex] (G--1) at (0.0, -1) [shape = circle, draw] {}; 
\node[vertex] (G-2) at (1.5, 1) [shape = circle, draw] {}; 
\draw[] (G-1) .. controls +(1, -1) and +(-1, 1) .. (G--4); 
\draw[] (G-3) .. controls +(0.5, -0.5) and +(-0.5, -0.5) .. (G-4); 
\draw[] (G-4) .. controls +(-0.75, -1) and +(0.75, 1) .. (G--3); 
\draw[] (G--3) .. controls +(-0.5, 0.5) and +(0.5, 0.5) .. (G--2); 
\draw[] (G--2) .. controls +(0.75, 1) and +(-0.75, -1) .. (G-3); 
\draw[] (G-2) .. controls +(-0.75, -1) and +(0.75, 1) .. (G--1); 
\end{tikzpicture} \right) 
 = \begin{tikzpicture}[scale = 0.5,thick, baseline={(0,-1ex/2)}] 
\tikzstyle{vertex} = [shape = circle, minimum size = 7pt, inner sep = 1pt] 
\node[vertex] (G--4) at (4.5, -1) [shape = circle, draw] {}; 
\node[vertex] (G-4) at (4.5, 1) [shape = circle, draw] {}; 
\node[vertex] (G--3) at (3.0, -1) [shape = circle, draw] {}; 
\node[vertex] (G--2) at (1.5, -1) [shape = circle, draw] {}; 
\node[vertex] (G-2) at (1.5, 1) [shape = circle, draw] {}; 
\node[vertex] (G-3) at (3.0, 1) [shape = circle, draw] {}; 
\node[vertex] (G--1) at (0.0, -1) [shape = circle, draw] {}; 
\node[vertex] (G-1) at (0.0, 1) [shape = circle, draw] {}; 
\draw[] (G-4) .. controls +(0, -1) and +(0, 1) .. (G--4); 
\draw[] (G-2) .. controls +(0.5, -0.5) and +(-0.5, -0.5) .. (G-3); 
\draw[] (G-3) .. controls +(0, -1) and +(0, 1) .. (G--3); 
\draw[] (G--3) .. controls +(-0.5, 0.5) and +(0.5, 0.5) .. (G--2); 
\draw[] (G--2) .. controls +(0, 1) and +(0, -1) .. (G-2); 
\draw[] (G-1) .. controls +(0, -1) and +(0, 1) .. (G--1); 
\end{tikzpicture}. 
\end{equation}
 So, the argument of $\mathscr{S}$, in this case, is stretch-stack-sortable. 
\end{example}

\begin{example}\label{skew}
 We also find that 
 $$\begin{tikzpicture}[scale = 0.5,thick, baseline={(0,-1ex/2)}] 
\tikzstyle{vertex} = [shape = circle, minimum size = 7pt, inner sep = 1pt] 
\node[vertex] (G--3) at (3.0, -1) [shape = circle, draw] {}; 
\node[vertex] (G--2) at (1.5, -1) [shape = circle, draw] {}; 
\node[vertex] (G-1) at (0.0, 1) [shape = circle, draw] {}; 
\node[vertex] (G-3) at (3.0, 1) [shape = circle, draw] {}; 
\node[vertex] (G--1) at (0.0, -1) [shape = circle, draw] {}; 
\node[vertex] (G-2) at (1.5, 1) [shape = circle, draw] {}; 
\draw[] (G-1) .. controls +(0.6, -0.6) and +(-0.6, -0.6) .. (G-3); 
\draw[] (G-3) .. controls +(0, -1) and +(0, 1) .. (G--3); 
\draw[] (G--3) .. controls +(-0.5, 0.5) and +(0.5, 0.5) .. (G--2); 
\draw[] (G--2) .. controls +(-0.75, 1) and +(0.75, -1) .. (G-1); 
\draw[] (G-2) .. controls +(-0.75, -1) and +(0.75, 1) .. (G--1); 
\end{tikzpicture}$$
 it stretch-stack-sortable. 
\end{example}

 Examples \ref{exwith112} and \ref{skew} illustrate the problem of classifying stretch-stack-sortable partition diagrams. 
 The importance of diagram algebras in both the study of algebraic groups and in the field of knot theory provides 
 a source of further motivation concerning out interest in lifting and generalizing 
 the notions of stack-sortability and pattern avoidance in permutations to basis elements in diagram algebras. 
 The following Theorem appears to be the first direct step in this direction. 
  The   situation indicated via the  bullet points    given in  Theorem   \ref{maintheorem} 
  formalizes   how  the  notion  of  avoiding  the pattern  $231$ 
  may be lifted from permutations to partition diagrams, 
  in  a way that is directly applicable to  
  lifting  West's stack-sorting map. 

\begin{theorem}\label{maintheorem}
 A partition diagram $\pi$ is stretch-stack-sortable if and only if: 

\begin{enumerate}

\item The only blocks of $\pi$ are propagating; 

\item For each block of $\pi$, it has the same number of upper and lower vertices; 

\item For each block of $\pi$, all of its lower nodes are consecutive as primed integers; and 

\item Letting      the blocks of $\pi$ be denoted in the form $C_{i} \cup D_{i}'$ for indices $i$, 
   and where $C_{i}$ and $D_{i}'$ respectively denote the set of upper and lower   
    nodes of a given block, and    writing    
\begin{equation}\label{Dprimechain}
 D_{1}' < D_{2}' < \cdots, 
\end{equation}
 according to the consecutive primed integers labeling $D_{i}'$, 
  the         situation indicated via the following bullet points cannot occur.    

$\bullet$ $D_{i_{1}}' < D_{i_{2}}' < D_{i_{3}}'$; 

$\bullet$ By applying the procedure used to define $\mathscr{S}$, 
 at the stage in this application when $D_{i_{3}}'$ contains the greatest non-singleton primed nodes, 
   one of the following situations occurs: 

$\bullet$ Either $D_{i_{2}}'$ is part of an $\mathscr{L}$-configuration and $D_{i_{1}}'$ is part of an $\mathscr{R}$-configuration; or 

$\bullet$ $D_{i_{2}}'$ is part of an $\mathscr{L}$-configuration and $D_{i_{1}}'$ is part of an $\mathscr{M}$-configuration; or 

$\bullet$ $D_{i_{2}}'$ is part of an $\mathscr{M}$-configuration and $D_{i_{1}}'$ is part of an $\mathscr{R}$-configuration; or 

$\bullet$ $D_{i_{2}}'$ is part of an $\mathscr{M}_{j}$-configuration and $D_{i_{1}}'$ is part of an $\mathscr{M}_{k}$-configuration, 
 with $j < k$. 

\end{enumerate}

\end{theorem}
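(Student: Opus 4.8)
The plan is to first reduce Definition \ref{definitionsss} to a single structural condition on $\mathscr{S}(\pi)$. Applying $\text{Stretch}_{\alpha,k}$ to an identity diagram $\{\{1,1'\},\ldots,\{m,m'\}\}$ yields, by \eqref{Stretchdelta}, exactly the diagrams in which every block has the symmetric form $A \cup A'$ for some $A \subseteq \{1,\ldots,k\}$, the sets $A$ (together with the singletons adjoined by $\delta_k$) forming a set-partition of $\{1,\ldots,k\}$; conversely every such \emph{symmetric} diagram is realized by taking $\alpha$ to list its upper blocks. Hence $\pi$ is stretch-stack-sortable if and only if every block of $\mathscr{S}(\pi)$ has the form $A \cup A'$, and the whole theorem reduces to characterizing when $\mathscr{S}(\pi)$ is symmetric in this sense.

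The second step records how $\mathscr{S}$ acts on blocks. Under the procedure of Section \ref{subsectionprocedure}, the bottom nodes of each non-singleton block retain their original labels, while the top nodes are relabelled consecutively, block by block, in the order in which the factors appear in the $\odot$-product \eqref{unorderedodot}. Consequently the blocks of $\mathscr{S}(\pi)$ are in bijection with those of $\pi$: a block $C_i \cup D_i'$ of $\pi$ becomes a block $E_i \cup D_i'$ of $\mathscr{S}(\pi)$ with $|E_i| = |C_i|$ and $E_i$ a contiguous range of the top-label line. From this, conditions (1)--(3) follow immediately as necessary: a non-propagating block of $\pi$ would force an all-top or all-bottom block of $\mathscr{S}(\pi)$, never of the form $A \cup A'$; a block with $|C_i| \neq |D_i|$ forces $|E_i| \neq |D_i'|$; and since $E_i$ is always a consecutive range, the symmetry requirement $E_i = D_i$ forces $D_i'$ to be consecutive.

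Granting (1)--(3), write the blocks as $C_i \cup D_i'$ with lower ranges ordered as in \eqref{Dprimechain}; these ranges partition the bottom row into consecutive intervals, so $D_1'$ occupies $\{1',\ldots\}$, then $D_2'$, and so on. By the relabelling above, $\mathscr{S}(\pi)$ is symmetric if and only if the $\odot$-product lists the blocks in exactly the bottom order $D_1', D_2', \ldots$, so that the $i$-th consecutive top range coincides with $D_i'$. It then remains to show that the recursive $\odot$-ordering agrees with \eqref{Dprimechain} precisely when the configuration of condition (4) never arises, which is the direct analogue of Theorem \ref{theoremKnuth1973}: the recursion \eqref{maindecomposition} plays the role of $s$ on the sequence of blocks, each weighted by its lower range, the pivot $B$ at a given stage being the block $D_{i_3}$ carrying the largest primed nodes, and the classes $\mathscr{L} < \mathscr{M}_1 < \cdots < \mathscr{M}_\mu < \mathscr{R}$ fixing the relative product positions. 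I would argue by induction on the number of propagating blocks: the pivot $B$ always has maximal lower range and is placed last, so the product is sorted if and only if each of $\mathscr{S}(\mathscr{L}), \mathscr{S}(\mathscr{M}_1), \ldots, \mathscr{S}(\mathscr{R})$ is sorted (inductive hypothesis) and no lower-ranked block $D_{i_1}$ is separated from a higher-ranked block $D_{i_2}$ across these classes in the wrong order. Enumerating the cross-class inversions $(D_{i_2}, D_{i_1})$ with $D_{i_1}' < D_{i_2}'$ relative to the fixed class order gives exactly the four bullet cases of condition (4); since the recursion on $\mathscr{L}, \mathscr{M}_i, \mathscr{R}$ never transfers a block between classes, each such inversion survives to the final diagram and obstructs symmetry, while the absence of all of them yields the sorted product.

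The main obstacle is the verification underlying condition (4): one must confirm that the four enumerated cross-class cases are exhaustive and that each is a genuine, non-removable obstruction. The genuinely new difficulty, absent from the permutation setting \eqref{pLnR}--\eqref{sLsRn} where only a left--right split occurs, is the presence of the middle configurations $\mathscr{M}_i$; these arise because the upper nodes of a block may straddle those of the pivot $B$, and handling their internal ordering (the case $\mathscr{M}_j, \mathscr{M}_k$ with $j < k$) together with the interaction between the global order \eqref{Dprimechain} and the stage-dependent notion of ``largest primed node'' is where the inductive bookkeeping must be carried out with care.
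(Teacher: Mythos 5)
Your proposal is correct and follows essentially the same route as the paper's proof: conditions (1)--(2) from the fact that $\mathscr{S}$ preserves the number of blocks with given numbers of upper and lower nodes, condition (3) from the consecutive relabelling of top nodes in the $\odot$-product, and condition (4) via the observation that a cross-class inversion at a pivot stage survives to the final product and forces a block whose (consecutive) top range cannot equal its bottom range, with the converse handled by the same induction on the decomposition \eqref{maindecomposition} yielding the sorted product \eqref{scrBodot}. The one refinement you add --- stating explicitly up front that stretches of identity diagrams are exactly the diagrams all of whose blocks have the symmetric form $A \cup A'$ --- is left implicit in the paper (which simply asserts certain blocks are ``impossible for the stretch of an identity diagram''), and making it a standalone lemma is a mild improvement in rigor rather than a different method.
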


\begin{proof}
 $(\Longrightarrow)$ Let $\pi$ be a stretch-stack-sortable partition diagram. The mapping $\mathscr{S}$ is such that the number of blocks in a partition 
 diagram $\mu$ with $u$ upper nodes and $l$ lower nodes is equal to the number of blocks in $\mathscr{S}(\mu)$
    with $u$ upper nodes and $l$ lower nodes. 
 So, $\pi$ cannot have any non-propagating blocks, because, otherwise, $\mathscr{\pi}$ would have a non-propagating block, contradicting that $\pi$ is 
 stretch-stack-sortable. By using the same property of $\mathscr{S}$ that we have previously indicated, we have that each block of $\pi$ is such that 
 it has the same number of upper and lower vertices. 
 Let $B$ be a block of $\pi$, and, by way of contradiction, 
 suppose that it is not the case that all of its lower nodes are consecutive as primed integers. 
 According to the procedure used to define $\mathscr{S}$, 
 in $\mathscr{S}(\pi)$, there would have to be a new block $N$ 
 such that the labels for the lower nodes of $N$ 
 are the same as the labels of the lower nodes of $B$
 and such that the upper nodes of $N$ are consecutive integers.
 So, there would be a block $N$ in $\mathscr{S}(\pi)$ 
 with consecutive integers labeling the upper nodes of $N$
 and such that the lower nodes of $N$ do not form a set of consecutive prime integers. 
 However, this is impossible for the stretch of an identity diagram. 
 Now, by way of contradiction, suppose that the situation indicated via the above six bullet points occurs. 
 In any out of the four possibilities corresponding to the last four bullet points, the removal 
 of the block containing $D_{i_{3}}'$ has the effect of separating 
 the partition diagram containing $D_{i_{1}}'$ and $D_{i_{2}}'$ 
 in such a way so that in the $\odot$-product shown in \eqref{maindecomposition}, 
 the block containing $D_{i_{1}}'$ will be in an argument of $\mathscr{S}$ strictly to 
 the left of the factor in the $\odot$-product given by $\mathscr{S}$ evaluated 
 at an expression involving $D_{i_{2}}'$. Consequently, 
 the ordering of the positions of the bottom nodes of $D_{i_{1}}'$ and $D_{i_{2}}'$ will be reversed, 
 but then the top nodes corresponding to $D_{i_{2}}'$ will be labeled with integers 
 strictly smaller than the top nodes corresponding to $D_{i_{1}}'$ 
 in the evaluation of $\mathscr{S}(\pi)$, giving us a crossing in the partition diagram 
 $\mathscr{S}(\pi)$ that would be impossible in the stretch of an identity partition diagram. 

 \ 

\noindent $(\Longleftarrow)$ Conversely, suppose that a partition diagram $\pi \in \mathscr{P}_{n}$ satisfies all four of the conditions listed above. 
 We apply $\mathscr{S}$ to $\pi$, so as to obtain a decomposition of the form indicated in \eqref{maindecomposition}. Adopting notation from 
 \eqref{maindecomposition}, we may deduce that the following properties hold, again working under the assumption that the four conditions in the 
 Theorem under consideration hold: (i) The expression $B'$ consists, apart from singleton blocks, of a single propagating block with the same number of 
 upper and lower nodes, 
 and such that the lower nodes are labeled with consecutive primed integers ending  with $n'$; 
 and (ii) If we were to keep the labelings for the lower nodes as in $\pi'$, 
 then 
 the labeling for the lower non-singleton blocks in the $\odot$-decomposition in 
 \eqref{maindecomposition} would be in the same order. 
 This latter property may be verified by a case analysis 
 of the form suggested by the last four out of the six bullet points given above. 
 Repeating this argument inductively, 
 and mimicking notation from \eqref{unorderedodot}, 
 we find that $\mathscr{S}(\pi)$ 
 may be written as an $\odot$-product of the form 
\begin{equation}\label{scrBodot}
 \mathscr{B}_{1}' \odot \mathscr{B}_{2}' \odot \cdots \odot \mathscr{B}_{m_{2}}', 
\end{equation}
 where each expression of the form $B_{i}'$ consists of, apart from singleton blocks, only one propagating block, and this propagating block has the same 
 number of top nodes as bottom nodes and has consecutive primed integers as its bottom nodes; moreover, the bottom nodes according to the ordering in 
 \eqref{scrBodot} form the sequence $1' < 2' < \cdots < n'$. So, a direct application of the labeling according to the definition for the $\odot$-product gives 
 us a stretch of an identity partition diagram. 
\end{proof}

\begin{example}
 If we return to the partition diagram in $\mathscr{P}_{9}$ 
 that is illustrated in \eqref{clearillustrationmain}
 within Example \ref{exampleambiguity}, we find that there is a propagating block containing the largest
 bottom node $9'$. 
 Being consistent with our notation in \eqref{Dprimechain}, 
 we write $D_{1}' = \{ 1', 2', 3' \}$, $D_{2}' = \{ 4', 5', 6' \}$, and $D_{3}' = \{ 7', 8', 9' \}$, 
 and we let the corresponding blocks in \eqref{clearillustrationmain}
 be denoted as $C_{1} \cup D_{1}'$, $C_{2} \cup D_{2}'$, and $C_{3} \cup D_{3}'$. 
 We have that $D_{1}' < D_{2}' < D_{3}'$, but when we   apply  the procedure used to define $\mathscr{S}$
 to the partition diagram in \eqref{clearillustrationmain}, 
 as we remove the block $C_{3} \cup D_{3}'$, we find that 
 $D_{2}'$ is part of an $\mathscr{L}$-configuration and $D_{1}'$ is part of an $\mathscr{M}$-configuration, 
 which is one of the forbidden patterns indicated in Theorem \ref{maintheorem}. 
 So, according to Theorem \ref{maintheorem}, the partition diagram in \eqref{clearillustrationmain}
 should not be stretch-stack-sortable, and we may verify that $\mathscr{S}$ evaluated at this same 
 diagram in \eqref{clearillustrationmain} is equal to 
\begin{equation}\label{72707272717271727170757A7M71A}
 \begin{tikzpicture}[scale = 0.5,thick, baseline={(0,-1ex/2)}] 
\tikzstyle{vertex} = [shape = circle, minimum size = 7pt, inner sep = 1pt] 
\node[vertex] (G--9) at (12.0, -1) [shape = circle, draw,fill=red] {}; 
\node[vertex] (G--8) at (10.5, -1) [shape = circle, draw,fill=red] {}; 
\node[vertex] (G--7) at (9.0, -1) [shape = circle, draw,fill=red] {}; 
\node[vertex] (G-7) at (9.0, 1) [shape = circle, draw,fill=red] {}; 
\node[vertex] (G-8) at (10.5, 1) [shape = circle, draw,fill=red] {}; 
\node[vertex] (G-9) at (12.0, 1) [shape = circle, draw,fill=red] {}; 
\node[vertex] (G--6) at (7.5, -1) [shape = circle, draw,fill=cyan] {}; 
\node[vertex] (G--5) at (6.0, -1) [shape = circle, draw,fill=cyan] {}; 
\node[vertex] (G--4) at (4.5, -1) [shape = circle, draw,fill=cyan] {}; 
\node[vertex] (G-1) at (0.0, 1) [shape = circle, draw,fill=cyan] {}; 
\node[vertex] (G-2) at (1.5, 1) [shape = circle, draw,fill=cyan] {}; 
\node[vertex] (G-3) at (3.0, 1) [shape = circle, draw,fill=cyan] {}; 
\node[vertex] (G--3) at (3.0, -1) [shape = circle, draw,fill=green] {}; 
\node[vertex] (G--2) at (1.5, -1) [shape = circle, draw,fill=green] {}; 
\node[vertex] (G--1) at (0.0, -1) [shape = circle, draw,fill=green] {}; 
\node[vertex] (G-4) at (4.5, 1) [shape = circle, draw,fill=green] {}; 
\node[vertex] (G-5) at (6.0, 1) [shape = circle, draw,fill=green] {}; 
\node[vertex] (G-6) at (7.5, 1) [shape = circle, draw,fill=green] {}; 
\draw[] (G-7) .. controls +(0.5, -0.5) and +(-0.5, -0.5) .. (G-8); 
\draw[] (G-8) .. controls +(0.5, -0.5) and +(-0.5, -0.5) .. (G-9); 
\draw[] (G-9) .. controls +(0, -1) and +(0, 1) .. (G--9); 
\draw[] (G--9) .. controls +(-0.5, 0.5) and +(0.5, 0.5) .. (G--8); 
\draw[] (G--8) .. controls +(-0.5, 0.5) and +(0.5, 0.5) .. (G--7); 
\draw[] (G--7) .. controls +(0, 1) and +(0, -1) .. (G-7); 
\draw[] (G-1) .. controls +(0.5, -0.5) and +(-0.5, -0.5) .. (G-2); 
\draw[] (G-2) .. controls +(0.5, -0.5) and +(-0.5, -0.5) .. (G-3); 
\draw[] (G-3) .. controls +(1, -1) and +(-1, 1) .. (G--6); 
\draw[] (G--6) .. controls +(-0.5, 0.5) and +(0.5, 0.5) .. (G--5); 
\draw[] (G--5) .. controls +(-0.5, 0.5) and +(0.5, 0.5) .. (G--4); 
\draw[] (G--4) .. controls +(-1, 1) and +(1, -1) .. (G-1); 
\draw[] (G-4) .. controls +(0.5, -0.5) and +(-0.5, -0.5) .. (G-5); 
\draw[] (G-5) .. controls +(0.5, -0.5) and +(-0.5, -0.5) .. (G-6); 
\draw[] (G-6) .. controls +(-1, -1) and +(1, 1) .. (G--3); 
\draw[] (G--3) .. controls +(-0.5, 0.5) and +(0.5, 0.5) .. (G--2); 
\draw[] (G--2) .. controls +(-0.5, 0.5) and +(0.5, 0.5) .. (G--1); 
\draw[] (G--1) .. controls +(1, 1) and +(-1, -1) .. (G-4); 
\end{tikzpicture},
\end{equation}
 being consistent with the colouring in \eqref{clearillustrationmain}. 
 We see that the partition diagram in \eqref{72707272717271727170757A7M71A} 
 is not a Stretch of any identity partition diagram. 
\end{example}

\section{Future work and applications}
 In regard to Knuth's famous result that the number of $1$-stack-sortable permutations 
 in $S_{n}$ is equal to the $n^{\text{th}}$ Catalan number $\frac{1}{n+1} \binom{2n}{n}$, 
 it would be desirable to obtain a meaningfully similar result 
 for counting the elements in $\mathscr{P}_{n}$
 satisfying the conditions in Theorem \ref{maintheorem}, 
 i.e., the number of stretch-stack-sortable 
 elements in $\mathscr{P}_{n}$.
 We leave this as an open problem. 

 An advantage of our lifting of West's stack-sorting map may be described as follows. Since our mapping $\mathscr{S}$ may be evaluated an an arbitrary 
 partition diagram, this allows us to apply and experiment with different ways of generalizing stack-sortability, based on different kinds of partition diagram 
 generalizations of identity permutations. We may obtain many further combinatorial results, relative to our  main results, through the use of variants and 
 generalizations of Definition \ref{definitionsss}. For example, what kinds of bijective and enumerative results can we obtain, relative to the Theorems 
 introduced in this article, if we instead consider partition diagrams that get mapped, under the application  of $\mathscr{S}$, to stretches of rook 
 diagrams with $2$-blocks of the form $\{ i, i' \}$? What if non-rectangular and non-singleton blocks are allowed? 

 Many remarkable results and applications have concerned preimages under West's stack-sorting map, as in the work of Defant et al.\ in 
 \cite{DefantEngenMiller2020}, for example. What kinds of applications and combinatorial results can we obtain concerning preimages under our 
 lifting of $s$? 

 Since our definition for stretch-stack-sortability is a lifting of $1$-stack-sortability, this raises the question as to how the notion of 
 $t$-stack-sortability may be generalized, according to our procedure from Section \ref{sectionlift}. 
 The study of $t$-stack-sortability is widely known to be much more difficult even for the $t = 2$ case, relative to the $t = 1$ case; see 
 \cite[\S1.2.3]{Vatter2005}, for example. In consideration as to Zeilberger's renowned proof \cite{Zeilberger1992} of West's conjecture that the number 
 of $2$-stack-sortable permutations in $S_{n}$ is 
 $$ \frac{2 (3n )!}{ (n+1)! (2n+1)!}, $$ 
 this inspires the determination of an analogue of the above indicated enumerative result, using a lifting of $2$-stack-sortability 
 via our mapping $\mathscr{S}$. 

\subsection*{Acknowledgements}
 The author wants to thank Mike Zabrocki for having described to the author (and introduced to the author)
 the partition algebra morphisms of the form indicated in \eqref{Stretchdelta} 
 and how such morphisms may be applied in the representation theory for partition algebras.

\end{document}